\newtheorem{theorem}{Theorem}[section]
\newtheorem{lemma}[theorem]{Lemma}
\theoremstyle{definition}
\newtheorem{definition}[theorem]{Definition}
\newtheorem{example}[theorem]{Example}
\newtheorem{proposition}[theorem]{Proposition}
\newtheorem{corollary}[theorem]{Corollary}
\newtheorem{conjecture}[theorem]{Conjecture}
\theoremstyle{remark}
\newtheorem{remark}[theorem]{Remark}
\begin{document}
\title[Infinite Geometrically Distinct Unknots] {Geometric Constraints in Link Isotopy}
\author{JOS\'E AYALA}
\address{Universidad de Tarapacá, Iquique, Chile}
              \email{jayalhoff@gmail.com}

%\date{t}
\subjclass[2020]{57K10, 53C42, 57N35, 57N25, 53A04}
\keywords{geometric knots, physical knots, Gordian unlinks, knot isotopy, thickness constraints}

%\baselineskip=20 true pt
%\maketitle \baselineskip=1\normalbaselineskip
\baselineskip=20 true pt
\baselineskip=1.10\normalbaselineskip
\maketitle 
%57K10    Knot theory
%53C42  	Immersions (minimal, prescribed curvature, tight, etc.)
%57N35   Embeddings and immersions in topological manifolds 
%57N25   Shapes (aspects of topological manifolds)
%53A04  	Curves in Euclidean space
%52A10  	Convex sets in $2$ dimensions (including convex curves) 
%\baselineskip=20 true pt
%\maketitle \baselineskip=1.2\normalbaselineskip
%%----------------------------------------------------------------------------------------
%%	                                     ABSTRACT
%%----------------------------------------------------------------------------------------
%\begin{abstract} 
%\baselineskip=20 true pt
%\maketitle \baselineskip=1.10\normalbaselineskip
%We prove the existence of families of distinct isotopy classes of physical unknots through the key concept of parametrised thickness. These unknots have prescribed length, thickness,  a uniform bound on curvature, and cannot be disentangled into a thickened round circle by an isotopy that preserves these constraints throughout. Thus, we establish the existence of gordian unknots: embedded tubes that are topologically trivial but geometrically locked, confirming a long-standing conjecture. These arise at distinct layers within a nested family of spaces, where the constraints on curvature and self-distance fragment the isotopy class of the unknot into infinitely disconnected components.
%\end{abstract}

\begin{abstract} 
\baselineskip=20 true pt
\maketitle \baselineskip=1.10\normalbaselineskip
We prove the existence of families of distinct isotopy classes of physical unknots through the key concept of parametrised thickness. These unknots have prescribed length, tube thickness, a uniform bound on curvature, and cannot be disentangled into a thickened round circle by an isotopy that preserves these constraints throughout. In particular, we establish the existence of \emph{gordian unknots}: embedded tubes that are topologically trivial but geometrically locked, confirming a long-standing conjecture. These arise within the space \( \mathcal{U}_1 \) of thin unknots in \( \mathbb{R}^3 \), and persist across a stratified family \( \{ \mathcal{U}_\tau \}_{\tau \in [0,2]} \), where \( \tau \) denotes the tube diameter, or thickness. The constraints on curvature and self-distance fragment the isotopy class of the unknot into infinitely many disconnected components, revealing a stratified structure governed by geometric thresholds. This unveils a rich hierarchy of geometric entanglement within topologically trivial configurations.
\end{abstract}

\section{Motivation and Context} 

The study of thick, or physical knots, is based on an idealised model where the rope is perfectly flexible, sectionally incompressible, frictionless, and satisfies a normalised bound on curvature of at most one and thickness two. In this theory, curvature and thickness, are saturated: they jointly maximise the allowable tube diameter.

%, leaving no slack for deformation without violating self-distance or curvature bounds.

In classical knot theory, an unknot is a loop that can be isotopically deformed into a round circle. However, when constraints on length, thickness, and curvature are considered, it was believed that there exist unknots that resist isotopies attempting to untangle them into a thickened round circle. These configurations are known as gordian unknots (their name comes from the Greek legend of the gordian knot \cite{guardian}). In this note, we establish the existence of such objects.

%%%%----------------------------------------------------------------------------------------
%%%%	                                        FIGURE 1
%%%%----------------------------------------------------------------------------------------
\begin{figure}[htbp]
   \centering
   \includegraphics[width=.8\linewidth]{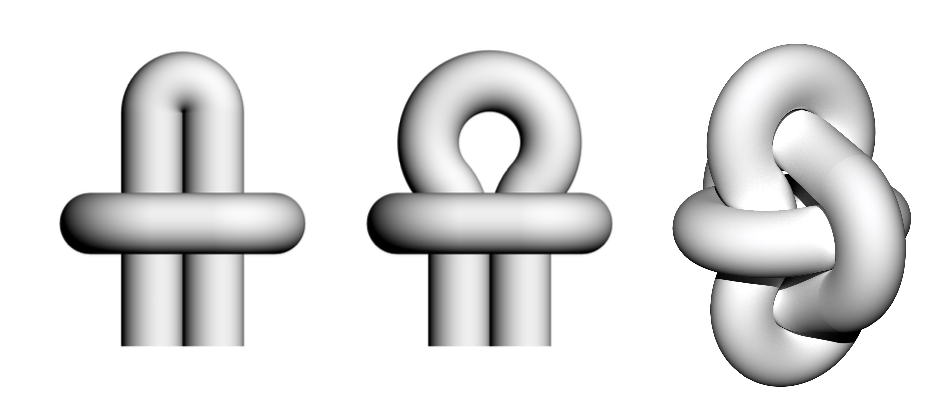}
   \caption{Left: This represents the standard approach to geometric knots. Both the cord and the horizontal ring maintain a uniform thickness of one, but their curvatures are bounded differently, two for the cord and one for the stadium ring. Under these conditions, the ring can slide freely along the cord without obstruction. Center: The ring to get stuck. In this case both the cord and the ring have curvature bounded above by one and thickness one. Right: Thin Borromean rings with curvature bounded by one and thickness one. This choice of thickness parameter is suitable for self-linking.}
   \label{fig:fitting}
\end{figure}

A key insight arises from the distinction between thin and thick knots, leading to families of physical knots parametrised by their tube thickness. In \cite{paperi}, we constructed examples of $2$-component gordian unlinks by detecting isotopy classes that differ from those in classical knot theory. We obtained a $1$-parameter family of minimal length unlinks with thickness varying in the interval $[1,2)$, and observed that thinner tubes give rise to rope geometries distinct from the thick case, where the thickness equals two and the tube saturates its curvature-limited normal injectivity radius, eliminating the geometric “slack” that enables thin knot constructions, see figure~\ref{fig:fitting}.

More generally, varying the thickness parameter $\tau \in [0,2)$ reveals a continuum of geometric behaviours. At one extreme, thickness zero corresponds to curves constrained only by curvature (i.e., $\kappa$-constrained embeddings). At the other, thickness two recovers the theory of thick knots, where the knot fills its normal injectivity radius. In this case, the geometry becomes maximally self-avoiding, the tube is locally constrained by its own thickness, and certain motions or isotopies become geometrically forbidden. When the radius of curvature exceeds the tube radius, the normal vectors fail to converge within the tubular neighbourhood. This prevents self-intersection and allows the configuration to remain smooth and embedded, even under tight geometric constraints.

The thin model extends thick knot theory by allowing the tube diameter to drop below the curvature maximum. While embeddedness and curvature remain enforced, the absence of a bulky tube permits greater geometric freedom. As $\tau \to 0$, curvature becomes the dominant constraint, giving rise to a rich space of admissible configurations. This regime reveals a stratified structure of entanglements shaped by fine-scale obstructions. In \cite{paperi}, we constructed an example showing that a gordian entanglement can occur even in the zero-thickness limit, showing that lockedness arises from curvature alone, independent of excluded volume effects.

Thin knots exhibit a subtle form of scale-dependent rigidity. As the tube diameter decreases, global flexibility increases, yet local isotopies are increasingly challenged by fine-scale geometric features that begin to dominate and obstruct smooth deformations. In addition, the spaces of thin unknots, are stratified, as thickness vanishes, the space fragments into an increasingly intricate hierarchy of isotopy classes governed by fine-scale geometric obstructions.

In 1994, an earlier candidate for gordian unknot, proposed by Freedman, He, and Wang \cite{freedman}, was later disentangled by the work of Kusner and Sullivan \cite{kusner_sullivan}. Subsequently in 2001, Pierański, Przybyl, and Stasiak proposed a thick unknot that could not be disentangled using the SONO (Shrink On No Overlap) algorithm \cite{pieranski1}. More recently in 2015, Coward and Hass \cite{cowardhass} proved the existence of a gordian split link, consisting of two unlinked thick knots that cannot be disentangled without violating geometric constraints. In 2025 Ayala and Hass provided the first examples of thick gordian unlinks, \cite{ayalahass}. %Kusner and Kusner in 2023 presented a pair of links with non-zero linking number that are Gehring-ropelength minimisers but not isotopic \cite{kusner_kusner}. 

The distinction between thin and thick knots is not merely theoretical, it manifests in physical systems. It is the relationship between thickness and minimum bend radius that ultimately determines the shape of the rope. For instance, take any cord, bend it into a U-turn, and push it through itself, the result is a kind of turnbuckle shape at the pushed end, see figure~\ref{fig:fitting}. Now repeat the experiment with a thicker cord: the resulting turnbuckle is likely thicker and more compact, reflecting the reduced flexibility of the core. This illustrates how increasing thickness imposes tighter geometric constraints. However, no ordinary cord or wire can form perfectly sharp bends as permitted by the idealised thick model, where singularities are intrinsic part of the mathematical model.

The results here presented indicate obstructions to extending Hatcher's proof of the Smale conjecture \cite{hatcher 1} within the framework of geometric knot theory, separating this with classical knot theory at a fundamental level.

%%----------------------------------------------------------------------------------------
%%	                    Cores and Parametrised Thickness
%%----------------------------------------------------------------------------------------
\section{Cores of Thin Tubes}\label{spaces}

The uniform bound on absolute curvature satisfied by the class of curves considered as cores of our physical knots gives rise to obstructions when attempting certain continuous deformations. This constraint restrict the flexibility of the curves, preventing transitions between specific configurations.

\begin{definition} \label{cbc} An embedded arc-length parameterised curve $\gamma: [0,s]\rightarrow {\mathbb R}^3$ is called {\it $\kappa$-constrained} if:
\begin{itemize}
\item $\gamma$ is $C^1$ and piecewise $C^2$
\item $||\gamma''(t)||\leq \kappa$, for all $t\in [0,s]$ when defined, $\kappa>0$ a constant.
\end{itemize}
 If $\gamma(0)=\gamma(s)$ and $\gamma'(0)=\gamma'(s)$ then $\gamma$ is called a knot. If $\gamma(0)\neq \gamma(s)$ then $\gamma$ is called an arc. 
\end{definition}

 The $\kappa$-constrained curves have absolute curvature bounded above almost everywhere by a positive constant. By normalising this constant $1/\kappa = 1$, the minimum admissible radius of curvature is set to 1. The space of arcs connecting fixed points $x,y\in {\mathbb R}^3$ is denoted by $\Sigma(x,y)$ and it is considered with the $C^1$ metric. In \cite{paperi} we proved that when $0<||x-y||<2$ the space $\Sigma(x,y)$ has two connected components one containing exclusively embedded arcs, see Theorems 3.7 and 3.9 therein.
 
 Next we assert that a $1$-constrained arc within a 3-ball of radius $1$ cannot intersect the boundary of the ball at a single, isolated point, see Lemma 2.2 in \cite{paperi}.

\begin{lemma}\label{3dntp} 
Suppose a $1$-constrained arc $\gamma:[0,s]\to \mathbb R^3$ is defined in a radius $1$ 3-ball $B$. Then, $\gamma([0,s]) \subset \partial B$ or $\gamma((0,s)) \cap \partial B=\emptyset$. 
\end{lemma}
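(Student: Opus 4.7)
The plan is to analyse the squared distance from $\gamma$ to the centre of $B$, combine the containment $\gamma\subset B$ with the curvature bound to force rigidity at every interior tangency, then propagate this rigidity by an open--closed argument. Let $c$ be the centre of $B$ and set $f(t)=1-\|\gamma(t)-c\|^2$; then $f\ge 0$ on $[0,s]$. Suppose, for contradiction, that $\gamma(t_0)\in\partial B$ for some $t_0\in(0,s)$ while $\gamma([0,s])\not\subset\partial B$, and assume for the moment that $\gamma$ is $C^2$ at $t_0$ (the finitely many corners will be treated separately). Then $t_0$ is an interior local minimum of $f\ge 0$, so $f'(t_0)=0$ gives $\gamma'(t_0)\perp(\gamma(t_0)-c)$ (tangency to $\partial B$), and $f''(t_0)\ge 0$. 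Direct computation yields
\[
f''(t_0)=-2-2\,\gamma''(t_0)\cdot(\gamma(t_0)-c),
\]
while Cauchy--Schwarz together with $\|\gamma''(t_0)\|\le 1$ and $\|\gamma(t_0)-c\|=1$ forces $\gamma''(t_0)\cdot(\gamma(t_0)-c)\ge -1$, hence $f''(t_0)\le 0$. Equality throughout forces $\gamma''(t_0)=c-\gamma(t_0)$ with $\|\gamma''(t_0)\|=1$, so the osculating circle of $\gamma$ at $t_0$ coincides with a great circle of $\partial B$.

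Next, set $T=\{t\in[0,s]:\gamma(t)\in\partial B\}$, closed by continuity and nonempty by assumption; the goal is to show $T\cap(0,s)$ is open in $(0,s)$, so that $T=[0,s]$ by connectedness of $[0,s]$, contradicting $\gamma\not\subset\partial B$. Given $t_0\in T\cap(0,s)$ a $C^2$ point, let $\alpha$ be the unit-speed great circle of $\partial B$ with $\alpha(t_0)=\gamma(t_0)$ and $\alpha'(t_0)=\gamma'(t_0)$; the previous paragraph shows $\gamma$ and $\alpha$ agree through second order at $t_0$. Pushing the Taylor expansions of $f$ and $\|\gamma''\|^2$ further, the nonnegativity of $f$ with a zero at $t_0$ forces its first nonzero higher coefficient to be of even order and nonnegative, while $\|\gamma''\|^2\le 1$ attaining its maximum at $t_0$ imposes matching constraints on $\gamma'''(t_0)$ (in particular, killing its binormal component). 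A short comparison--uniqueness step, or equivalently the observation that the only unit-speed curve on $\partial B$ with the prescribed tangent at $\gamma(t_0)$ is $\alpha$, then gives $\gamma=\alpha$ on a neighbourhood of $t_0$, establishing openness at $C^2$ points. Corners are handled by applying this analysis to each smooth piece flanking them and invoking the $C^1$ matching at the junction.

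The main obstacle is the last step: the infinitesimal matching at $t_0$ does not, on its own, force $\gamma$ and $\alpha$ to coincide on a neighbourhood when $\gamma$ is only piecewise $C^2$, since Taylor series then do not determine $\gamma$. The appropriate replacement is a barrier/comparison argument exploiting that $\partial B$ has principal curvature exactly $1$, matching the curve's ceiling: once $\gamma$ is tangent to $\partial B$ from inside it has no curvature budget left to leave and return, as any such excursion would require a strict curvature deficit on one side followed by a strict surplus on the other, contradicting $\|\gamma''\|\le 1$ uniformly. Making this rigorous is where the technical core of the proof lies; the higher-order Taylor calculations above serve mainly as confirmation that the intuition survives close inspection.
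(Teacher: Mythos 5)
The paper does not actually prove this lemma in-text (it is quoted from Lemma~2.2 of the cited reference \cite{paperi}), so your attempt has to stand on its own — and as written it has a genuine gap, one you yourself flag: the propagation step. Second-order agreement of $\gamma$ with a great circle at the single tangency time $t_0$ does not force coincidence on a neighbourhood for a curve that is merely $C^1$ and piecewise $C^2$; Taylor jets do not determine such a curve, the third-derivative bookkeeping does not close, and the ``barrier/comparison argument'' you invoke at the end is named but never carried out. So the open–closed argument is not completed and the proof does not go through in the form given.

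The fix is short and uses exactly the computation you already performed, applied globally rather than only at $t_0$. You showed $f''(t)=-2-2\,\gamma''(t)\cdot(\gamma(t)-c)$ and bounded the dot product by Cauchy--Schwarz at the tangency point; but the same estimate, using $\|\gamma''(t)\|\le 1$ and $\|\gamma(t)-c\|\le 1$ (which holds for \emph{every} $t$ since $\gamma$ stays in the closed ball), gives $f''(t)\le 0$ at every point where $\gamma$ is $C^2$. Since $f$ is $C^1$ and $f'$ is non-increasing on each smooth piece, $f'$ is non-increasing on all of $[0,s]$, i.e.\ $f$ is concave — and this disposes of the corners automatically, with no separate case analysis. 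A nonnegative concave function on $[0,s]$ that vanishes at an interior point $t_0$ vanishes identically: concavity at $t_0=\lambda\cdot 0+(1-\lambda)s$ forces $f(0)=f(s)=0$, and then writing $t_0$ as a convex combination of any $t\in(0,s)$ with an endpoint forces $f(t)=0$. Hence $\gamma([0,s])\subset\partial B$ whenever $\gamma((0,s))$ meets $\partial B$, which is the lemma. The equality case of Cauchy--Schwarz, the osculating great circle, and the higher-order expansion are all unnecessary.
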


Next result highlights a geometric obstruction to continuous deformations of $1$-constrained curves. This obstruction depends exclusively on curvature and the distance between fixed endpoints of an arc. When the distance between the endpoints of an arc $\gamma$ is less than $2$, the line segment connecting $\gamma(0)$ to $\gamma(s)$ and another arc connecting the same endpoints, having a point sufficiently above the line segment, belong to distinct homotopy classes in the space of $1$-constrained arcs in $\mathbb{R}^3$ connecting these fixed endpoints, see Lemma 2.5 in \cite{paperi}. 

\begin{lemma}\label{r1}  
(Geometric obstruction). A $\kappa$-constrained arc $\gamma: [0,s]\to{\mathcal C}$ such that:
$${\mathcal C}=\{(x,y,z)\in{\mathbb R}^3\,|\, x^2+y^2<1,\,z\geq 0 \}$$
is an open cylinder, cannot satisfy both:
\begin{itemize}
\item $\gamma(0),\gamma(s)$ are points on the $xy$-plane.
\item If $S$ is a radius $1$ sphere with centre on the negative $z$-axis, and $\gamma(0),\gamma(s)\in S$. Then, some point in the image of $\gamma$ lies above $S$.
\end{itemize}
In addition, if $\gamma$ satisfy {\rm (2)} then its diameter is at least 2.
\end{lemma}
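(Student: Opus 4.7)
The plan is to argue by contradiction: suppose $\gamma$ satisfies both (1) and (2). Set $c=(0,0,-h)$ for the centre of $S$, so $r=\sqrt{1-h^2}$ is the common distance of $A=\gamma(0)$ and $B=\gamma(s)$ to the $z$-axis. Hypothesis (2) produces an interior point of $\gamma$ strictly outside $\bar B(c,1)$, so the height function $\gamma_z$ attains an interior maximum $z^*>1-h$ at some $p=\gamma(t^*)$ with $t^*\in(0,s)$. At $p$ the tangent is horizontal and the principal normal, when nonzero, has non-positive vertical component.

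The engine of the proof is Lemma~\ref{3dntp} applied to a unit ball chosen so that a sub-arc of $\gamma$ is trapped inside it. The natural candidate is $\tilde B=B(p-(0,0,1),1)$, whose boundary is tangent to $\gamma$ at $p$ from below; a second-order Taylor expansion shows that $\gamma$ lies outside $\tilde B$ in a small neighbourhood of $t^*$. The task is then to exhibit two further times $t_1<t^*<t_2$ at which $\gamma$ meets $\partial\tilde B$, such that $\gamma|_{[t_1,t_2]}\subset\bar{\tilde B}$, so that Lemma~\ref{3dntp} applies. Here the cylinder constraint is decisive: the endpoints have height $0$ and horizontal distance at most $r<1$ from the axis, so in order to descend from $p$ to $A$ or $B$ inside the open cylinder, the curve is forced to re-enter $\bar{\tilde B}$. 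Granting such a sub-arc, Lemma~\ref{3dntp} forces it either to lie entirely on $\partial\tilde B$---whence the $1$-curvature bound and the horizontal tangent at $p$ make it coincide with a horizontal equator of $\partial\tilde B$, a circle of radius $1$ incompatible with the strict inclusion $\gamma\subset\mathcal{C}$---or to meet $\partial\tilde B$ only at $t_1,t_2$, contradicting the presence of $p\in\partial\tilde B$ in the interior of the sub-arc.

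For the diameter statement, I would observe that once $p$ satisfies $|p-c|>1$ and the curve must reach both $A$ and $B$ within the cylinder, the unit-curvature constraint forces the excursion above $S$ to extend over a full unit sphere diameter: a local comparison of $\gamma$ near $p$ with a unit circle in the vertical plane containing its principal normal, combined with the triangle inequality applied to $p$ and one of the endpoints, produces two points of $\gamma$ at Euclidean distance at least $2$.

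The hardest step will be justifying the forced re-entry into $\tilde B$ and, more generally, a no-sideways-escape principle: without the cylinder confinement the curve could freely swing around any obstructing ball. The essential input is that no horizontal circle of radius strictly less than $1$ satisfies the $1$-curvature bound, so the curve cannot perform a tight horizontal loop inside $\mathcal C$. Rigorously converting this heuristic into a forced re-entry, and handling the cases where the endpoints' position with respect to $\tilde B$ is borderline, is the technical core of the proof.
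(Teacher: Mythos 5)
Your overall strategy coincides with the one the paper sketches in Remark~2.4: argue by contradiction, pass to the point of maximum height, and invoke Lemma~\ref{3dntp} there. But the step you defer as ``the technical core'' is precisely the content of the proof, and as you have set it up it does not go through. Lemma~\ref{3dntp} applies to an arc \emph{contained in} a closed unit ball that meets the boundary sphere; you need a sub-arc $\gamma|_{[t_1,t_2]}\subset\bar{\tilde B}$ with $t^*$ interior to $[t_1,t_2]$. Yet your own second-order comparison shows that near $t^*$ the curve lies \emph{outside} the open ball $\tilde B=B(p-(0,0,1),1)$ tangent from below: a height maximum with $\|\gamma''\|\le 1$ bends no faster than the unit sphere, so locally $\gamma$ sits on the far side of $\partial\tilde B$ from the centre. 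A curve locally outside the open ball can lie in $\bar{\tilde B}$ near $t^*$ only if it locally lies on the sphere itself, which you have not established and which is generically false. So either the ball is wrong or the containment is in the wrong direction; the ``forced re-entry'' you hope for would produce an arc \emph{outside} a unit ball touching its boundary, a configuration to which Lemma~\ref{3dntp} says nothing. The correct choice of ball (one for which the cylinder constraint $x^2+y^2<1$ together with $z\le z^*$ genuinely traps a sub-arc in the closed ball, with the touching point at an interior parameter) is exactly what must be exhibited, and nothing in the proposal pins it down; the endpoints sit at height $0$ while $\tilde B$ reaches down to height $z^*-2$, so even the existence of the times $t_1,t_2$ is not controlled.

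Two further points. First, your heuristic that ``no horizontal circle of radius strictly less than $1$ satisfies the $1$-curvature bound'' is the right intuition for why the cylinder forbids a tight U-turn, but it is not a substitute for the containment argument: the curve need not perform a horizontal loop, it can descend helically, and ruling that out is where the curvature bound, the radius of the cylinder, and Lemma~\ref{3dntp} must interact quantitatively. Second, the final clause of Lemma~\ref{r1} (diameter at least $2$ for any arc satisfying (2)) is only gestured at; ``a local comparison \ldots combined with the triangle inequality'' does not identify the two points realizing distance $2$. The natural route is to observe that, by the first part, an arc satisfying (2) cannot remain in any radius-one cylinder positioned as in the statement, and to convert that exclusion into a lower bound on the horizontal extent of the arc; that conversion is absent. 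As it stands the proposal reproduces the paper's one-sentence outline and correctly identifies Lemma~\ref{3dntp} as the engine, but the argument has a genuine gap at the trapping step and an internal inconsistency in the choice of comparison ball.
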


\begin{remark}\hfill
\begin{enumerate}
\item The key idea in the proof of Lemma \ref{r1} is to construct an arc that makes a long U-turn while remaining entirely within the open cylinder ${\mathcal C}$. Then, through compactness, Lemma \ref{3dntp} is applied at the maximum height of the arc to conclude that such an arc cannot exist.
\item The sharp U-turn obstruction, derived from the curvature bound in Lemma~\ref{r1}, is independent of thickness. In Corollary~4.3 of \cite{paperi}, we proved that, under a curvature constraint alone, there exist locked unlinks of thickness zero.
\end{enumerate}
\end{remark}

\begin{definition} \label{shortlong}Let $B$ be a radius $1$ 3-ball centred at the $z$-axis. A short arc has its endpoints on $\partial B$ and it is defined entirely on $\partial B$, or it is defined in $int(B)$ except at its endpoints. A long arc has its endpoints on $\partial B$ and has a point above $S$. Therefore:
\begin{itemize}
\item a short arc satisfies (1) in Lemma \ref{r1}
\item a long arc satisfies (2) in Lemma \ref{r1}.
\end{itemize}
By Lemma \ref{r1} these are mutually exclusive
\end{definition}

%%----------------------------------------------------------------------------------------
%%	                    Thin and Thick Links: Parametrised Thickness
%%----------------------------------------------------------------------------------------
%\section{Thin and Thick Links: Parametrised Thickness}
\section{Thin and Thick Links: Parametrised Thickness and Stratification}

This section introduces the concept of parametrised thickness for $1$-constrained knots and links, with a focus on the nested structure of the associated spaces. 
%A full development of the framework will appear in a separate work.

A \(1\)-constrained knot or link is an embedding of one or more closed curves in \(\mathbb{R}^3\). We adopt the notion of thickness from \cite{simon1}, using the embedded tube diameter, which better captures its geometric meaning; see also \cite{diao1, cantarella 1}. The thickness is defined as:
\[
\mathrm{Thi}(\gamma) = \min \left\{ 2, R_2(\gamma) \right\}
\]
where \( R_2(\gamma) \) denotes the minimal distance between all pairs of \emph{doubly critical points}; that is, pairs \( p, q \in \gamma \) such that the chord \( \overline{pq} \) is orthogonal to the tangent vectors at both endpoints.

This formulation reflects the interplay between curvature and self-avoidance. The curvature bound limits the maximum allowable tube diameter to two, while the doubly critical distance constrains local proximity. 

\begin{definition}
A \emph{thin knot} or link is a $1$-constrained embedding of fixed length with fixed tube thickness in the interval \( [0,2) \). Two such knots are said to be \emph{thin isotopic} if they are connected by a $1$-constrained isotopy that preserves both length and tube thickness throughout. Knots or links that are isotopic but not thin isotopic are called \emph{gordian}.
\end{definition}

\begin{definition}
For each \( \tau \in [0,2] \), we define \( \mathcal{U}_\tau \) as the space of all $1$-constrained unknots whose thickness satisfies \( \mathrm{Thi}(\gamma) \geq \tau \). That is,
\[
\mathcal{U}_\tau = \left\{ \gamma \colon S^1 \to \mathbb{R}^3 \,\middle|\, \mathrm{Thi}(\gamma) \geq \tau \text{ and } \gamma \text{ is an unknot} \right\}.
\]
\end{definition}

This definition treats thickness as a geometric constraint imposed on the admissible configurations, rather than a fixed property of a knot type. In particular, a given knot may admit multiple realisations with varying thickness and thus belong to several \( \mathcal{U}_\tau \) simultaneously.

\subsection*{Monotonicity and stratification.} The family \( \{ \mathcal{U}_\tau \}_{\tau \in [0,2]} \) is nested by geometric admissibility. That is,
\[
\tau > \tau' \quad \text{implies} \quad \mathcal{U}_\tau \subset \mathcal{U}_{\tau'},
\]
which expresses the monotonicity property: any configuration admissible at thickness \( \tau \) remains admissible for all smaller values.

\begin{example}
Let \( \tau = 2 \) and \( \tau' = 1 \). Consider a round circle \( \gamma \) of radius 1. Its curvature is constant and equal to 1, satisfying the fixed bound \( \kappa = 1 \). The shortest distance between doubly critical pairs occurs at antipodal points and equals 2. Hence,
\[ 
\mathrm{Thi}(\gamma) = \min\left\{ 2, R_2(\gamma) \right\} = \min\{2, 2\} = 2 \geq \tau.
\]
Therefore, \( \gamma \in \mathcal{U}_2 \subset \mathcal{U}_1 \), illustrating the monotonicity property.
\end{example}

 The monotonicity endows the family \( \{ \mathcal{U}_\tau \}_{\tau \in [0,2]} \) with a stratified structure, indexed by thickness. As the parameter \( \tau \) decreases, the constraint on self-distance weakens, and the corresponding space \( \mathcal{U}_\tau \) becomes strictly larger. The nested inclusions reflect a filtration by geometric admissibility:
\[
\mathcal{U}_2 \subset \mathcal{U}_\tau \subset \mathcal{U}_{\tau'} \subset \cdots \subset \mathcal{U}_\epsilon,
\quad \text{for all } 2 > \tau > \tau' > \cdots > \epsilon \geq 0.
\]
We define the space of \emph{thin unknots} as the union of all spaces with strictly sub-maximal thickness:
\[
\mathcal{T} = \bigcup_{0 \leq \tau < 2} \mathcal{U}_\tau.
\]
Note the limit space \( \mathcal{U}_2 \) is the space of \emph{thick unknots}. The case \( \tau = 0 \) is included to capture the limiting case of $1$-constrained unknots without any enforced self-distance constraint. As shown in \cite{paperi}, such zero-thickness embeddings can exhibit nontrivial isotopy obstructions within the $1$-constrained regime, and serve as limiting cases of thin gordian structures. Each \( \mathcal{U}_\tau \) is endowed with the \( C^1 \) metric on the space of immersions or the Hausdorff metric on tubular neighbourhoods. 
%PROVE IN THE FUTURE
%The inclusion maps \( \mathcal{U}_\tau \hookrightarrow \mathcal{U}_{\tau'} \) are continuous with respect to these metrics, reinforcing the nested geometric structure.

The existence of thin gordian unknots reveals that geometric constraints, particularly curvature and thickness, can obstruct isotopies even in topologically trivial settings. This highlights a fundamental distinction between the topological and geometric classification of knots. From a geometric standpoint, such configurations reveal that the stratified space \( \{ \mathcal{U}_\tau \} \) encodes more than topological information: the ability to deform a knot is tightly controlled by the parameter \( \tau \), with certain entanglements persisting even in the absence of excluded volume. These phenomena underscore the need for a refined theory of isotopy classes under geometric constraints, and are relevant in applied contexts involving entangled filaments, such as DNA organisation and soft robotics.

\begin{remark}
The thin case offers a relaxed framework for studying the ropelength problem. As the thickness parameter \( \tau \) increases, the condition
\[
\mathrm{Thi}(\gamma) = \min\left\{2, R_2(\gamma)\right\} \geq \tau
\]
admits fewer and fewer configurations, since all doubly critical pairs must be at least distance \( \tau \) apart. At \( \tau = 2 \), both curvature and self-distance constraints are maximally saturated, corresponding to the thick rope case, where flexibility is sharply limited and isotopies are highly constrained. In contrast, thinner tubes allow greater geometric freedom. These relaxed models preserve essential features of the ropelength functional while broadening the admissible configuration space for analysis and deformation.
\end{remark}

%Inspired by the classical work of Dubins and Sussmann \cite{dubins 1, sussman} on curvature-constrained paths, we conjecture that length-minimising thin physical unknots in \( \mathbb{R}^3 \) admit a similarly structured, yet more intricate, decomposition. 

Inspired by the classical work of Dubins and Sussmann \cite{dubins 1, sussman, papera} on curvature-constrained paths, we conjecture that length-minimising thin physical unknots in \( \mathbb{R}^3 \) admit a similarly structured, yet more intricate, decomposition.

To date, the only exact ropelength known in the entire theory of thick knots is that of the round circle. After decades of efforts not a single nontrivial knot has an exact ropelength value. We propose a canonical decomposition that could lead to the first exact solutions beyond the unknot.

\begin{conjecture} 
Every length-minimizing unknot in $\mathcal{U}_\tau$ for $\tau\in (0,2)$ admits a decomposition into finitely many elementary geometric components:
\begin{itemize}
    \item Circular arcs of unit radius (saturating the curvature bound),
    \item Straight segments (minimizing length locally),
    \item Sussmann helices (mediating between these extremes),
\end{itemize}
with the number of elementary components bounded by $O(1/\tau)$. 
%This structure reflects the knot's optimal balancing of three competing demands: maximizing thickness, minimizing length, and maintaining an unknotted topology.
\end{conjecture}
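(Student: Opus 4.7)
The plan is to cast the length-minimisation over each isotopy component of $\mathcal{U}_\tau$ as an optimal control problem and adapt the Dubins--Sussmann machinery cited in the paper to the presence of the nonlocal thickness constraint $R_2(\gamma)\geq\tau$. Take the state to be $(\gamma(s),\gamma'(s))\in \R^3\times S^2$, the control to be the acceleration $u=\gamma''(s)$ with $|u|\leq 1$, the running cost to be $1$, and impose the closure and $C^1$-matching conditions at the endpoints. Existence of a minimiser in each connected component of $\mathcal{U}_\tau$ for $\tau\in(0,2)$ follows from Arzelà--Ascoli in $C^1$ using the uniform curvature bound, together with lower semicontinuity of length and upper semicontinuity of $\mathrm{Thi}$ under $C^1$-convergence of the tangent field.

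Given a minimiser, I would apply the Pontryagin Maximum Principle in its state-constrained form to pin down the pointwise structure of extremals. On an interval where neither the curvature bound nor the thickness bound is active, the Hamiltonian is affine in $u$ and the minimiser follows a straight segment. On an interval where only $|u|=1$ is saturated, one recovers a unit-radius circular arc, exactly as in the classical Dubins--Sussmann analysis. On an interval where the thickness is saturated, that is, two points $\gamma(s_1),\gamma(s_2)$ lie at distance exactly $\tau$ with chord orthogonal to both tangents, the PMP with distributional multipliers supported on the doubly critical set selects a Sussmann helix whose radius and pitch are fixed by the saturation $|\gamma(s_1)-\gamma(s_2)|=\tau$ together with the orthogonality condition.

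The main structural step is to prove that each minimiser is a \emph{finite} concatenation of arcs of these three kinds and to bound the number of pieces. Real-analyticity of the extremal ODEs, combined with Sussmann's classification of spatial curvature-constrained extremals \cite{sussman}, should rule out Fuller-type accumulation of switches between straight and circular pieces; the subtler issue is to rule out accumulation of active-thickness intervals. For this I would argue that each saturation event has a definite arclength cost: by the scaled U-turn obstruction of Lemma~\ref{r1}, each doubly critical pair at distance $\tau$ with transverse tangents forces a sub-arc of unit-curvature length $\gtrsim \tau$ between the two contact points. Since the length of the minimiser is uniformly bounded within each component as $\tau$ stays in a compact subinterval of $(0,2)$, an arclength budget then yields the claimed $O(1/\tau)$ count on the number of elementary pieces.

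The hardest part will be the analytic handling of the nonlocal thickness constraint. Unlike the pointwise curvature bound, $R_2(\gamma)\geq\tau$ couples distant arclength parameters, so the adjoint system carries measure multipliers supported on the doubly critical set rather than ordinary Lagrange multipliers, and the Maximum Principle must be invoked in its nonsmooth form. Establishing that only finitely many saturating pairs can be simultaneously active, and that each activation actually isolates a Sussmann helical sub-arc rather than a degenerate singular regime, is where most of the work will go; once this is in hand, the elementary arclength counting that produces the $O(1/\tau)$ bound should follow directly. A secondary obstacle is the behaviour at the transition points between arc types, where one must verify $C^1$-matching and exclude second-order corner singularities that the idealised thick model tolerates but that the minimisation does not automatically forbid.
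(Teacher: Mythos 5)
The statement you are addressing is presented in the paper as a \emph{conjecture}: no proof is given there, so your argument has to stand entirely on its own, and as written it is a research programme rather than a proof --- nearly every decisive step is flagged with ``should,'' ``I would,'' or ``is where most of the work will go.'' The most serious gap is the treatment of the nonlocal constraint $R_2(\gamma)\geq\tau$. The Pontryagin Maximum Principle, even in its state-constrained or nonsmooth forms, handles constraints that are pointwise in the state $(\gamma(s),\gamma'(s))$; the thickness condition couples distinct parameters $s_1,s_2$, and the correct first-order theory for such constraints (the balance of a ``strut'' measure on the contact set against the curvature force, as developed for ropelength criticality by Cantarella--Kusner--Sullivan and by Schuricht--von der Mosel) does not reduce to a measure-multiplier PMP along a single time axis. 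Worse, in that theory the contact set of a tight configuration is typically a \emph{continuum} of doubly critical chords rather than finitely many isolated activations, and on a strut-contacted interval the Euler--Lagrange balance does not single out a Sussmann helix: the local shape depends on the distribution of the strut measure. So your two key structural claims --- finitely many simultaneously active saturating pairs, and helical geometry on saturated intervals --- are not merely unproved; they are in tension with the known structure of ropelength-critical configurations such as the tight clasp.

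The finiteness and counting steps also do not close. Ruling out Fuller-type chattering by ``real-analyticity of the extremal ODEs'' begs the question, because the measure multipliers supported on the contact set destroy exactly the analyticity of the adjoint system at the points where switching occurs; finiteness of the decomposition is the content of the conjecture, not a consequence of citing Sussmann's classification, which was proved in the absence of any self-distance constraint. Finally, the quantitative input for the $O(1/\tau)$ bound --- that each doubly critical pair at distance $\tau$ forces a sub-arc of length $\gtrsim\tau$ --- is attributed to a ``scaled'' Lemma~\ref{r1}, but that lemma is a non-quantitative mutual-exclusion statement about arcs in a unit cylinder relative to a unit sphere and yields no length-per-contact lower bound; without such a bound the arclength budget produces no count at all. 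Your existence argument via Arzel\`a--Ascoli is the most plausible part, though even there you must verify that the connected component of $\mathcal{U}_\tau$ is closed under the $C^1$ limit and that the limit remains embedded and of the same isotopy class, which the positive thickness makes believable but which you do not argue.
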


\begin{conjecture}[Stratified Obstruction Theory]
For each \( \tau \in (0,2] \), the space \( \mathcal{U}_\tau \) of \( 1 \)-constrained physical unknots contains countably infinite isotopy classes. As \( \tau \to 0 \), geometric obstructions, such as bottlenecks, cone-angle collapses, and curvature traps, emerge at finer scales and proliferate, fragmenting the space into more rigid and distinguishable components.

These isotopy classes organise into a stratified space \( \mathcal{K} = \bigcup_{\tau \leq 2} \mathcal{U}_\tau \) in the sense of Mather: each stratum corresponds to configurations with a fixed obstruction profile, and strata of higher complexity accumulate onto simpler ones as thickness decreases. This structure reflects a scale-sensitive geometry, where isotopy flexibility decays through a hierarchy of singularities, revealing a deeply layered phase of physical entanglement.
\end{conjecture}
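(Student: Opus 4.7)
The plan is to prove the countable-class assertion by explicit construction, using the geometric obstruction of Lemma \ref{r1} as the fundamental rigidity tool, and then to organise the resulting isotopy classes into a stratification indexed by obstruction profiles. For the counting claim, I would iterate the gordian construction of \cite{paperi} to produce, for each $n \in \mathbb{N}$, an unknot $\gamma_n \in \mathcal{U}_\tau$ carrying $n$ pairwise disjoint \emph{U-turn clasps} --- local configurations each modelled on the basic gordian unknot and separated by thin, nearly straight segments of sufficient length that the clasps do not geometrically interact. One then defines a clasp count $c\colon \mathcal{U}_\tau \to \mathbb{Z}_{\geq 0}$ by counting maximal disjoint cylindrical subregions in which $\gamma$ realises condition (2) of Lemma \ref{r1}, that is, a long arc in the sense of Definition \ref{shortlong}. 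To show $c(\gamma_n) = n$ is a thin-isotopy invariant, one argues by contradiction: any continuous path in $\mathcal{U}_\tau$ along which $c$ jumps must, at the jump time, transition a long arc into a short arc inside some cylindrical region, but this is exactly the motion forbidden by Lemma \ref{r1} combined with the two-component structure of $\Sigma(x,y)$ for endpoint distance less than $2$. Thus $\{\gamma_n\}_{n\in\mathbb{N}}$ represent distinct components of $\mathcal{U}_\tau$, giving countably many isotopy classes.

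For the stratified structure, I would refine the invariant into an \emph{obstruction profile} $P$ --- combinatorial data encoding not only the clasp count but also nesting pattern, cone-angle thresholds, and the minimum thickness $\tau(P)$ at which each clasp becomes geometrically locked. The stratum $\mathcal{U}_\tau(P) \subset \mathcal{U}_\tau$ then consists of configurations realising exactly the profile $P$. The monotonicity $\mathcal{U}_\tau(P) \subset \mathcal{U}_{\tau'}(P)$ for $\tau > \tau'$ follows from the nested inclusion already noted in the excerpt, and new profiles become admissible as $\tau \to 0$ because finer-scale obstructions (bottlenecks, narrow cone collapses) require less excluded volume and hence survive in thinner tubes. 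The Mather-type frontier condition I would establish via compactness: a sequence in the stratum with profile $P$ and thicknesses $\tau_k \to \tau_\infty$ possesses, by the uniform curvature bound and arc-length parametrisation, a $C^1$-convergent subsequence whose limit lies in a stratum whose profile is a degeneration of $P$, with some clasps possibly merging or dissolving into simpler profiles in the closure.

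The main obstacle will be ruling out \emph{teleportation} of clasps along thin isotopies: an isotopy could in principle dissolve one clasp while simultaneously creating another elsewhere, preserving the numerical value of $c$ but through a discontinuous rearrangement of the profile. To exclude this, Lemma \ref{r1} must be upgraded to a parametrised statement --- that on any continuous one-parameter family of $1$-constrained arcs with fixed endpoints, the long/short dichotomy is preserved --- which in turn requires controlling the supporting cylinder as the arc deforms. A second difficulty is verifying Whitney regularity of the stratification in the infinite-dimensional setting of $C^1$ immersions with bounded curvature; absent this, one obtains a stratification by isotopy invariants rather than a Mather stratification in the strict sense. A coarse version of the frontier condition is likely to hold unconditionally and to suffice for the geometric content of the conjecture, while full Whitney regularity may require additional hypotheses on the local models near each stratum.
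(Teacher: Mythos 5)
The statement you are proving is labelled a \emph{conjecture} in the paper: the author offers no proof of it. The closest the paper comes is Proposition~\ref{prop:U1-infinite}, which establishes $\pi_0(\mathcal{U}_1)=\infty$ by stacking $n$ vertically aligned copies of the double overhand of Theorem~\ref{main1} and invoking the cone-angle/bottleneck argument, together with the corollary extending this to all $\tau\leq 1$ by the inclusion $\mathcal{U}_1\subset\mathcal{U}_\tau$. Your clasp construction is essentially the same idea as that stacking argument, so for the counting claim at small $\tau$ you are on the paper's intended track. But three genuine gaps remain between your proposal and the full statement.

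First, the invariance of the clasp count is the crux, and neither you nor the paper establishes it. Lemma~\ref{r1} is a static statement about one arc in one \emph{fixed} cylinder; your ``parametrised upgrade'' must control a supporting cylinder that moves with the isotopy, and you have not specified how the cylinder is chosen canonically from the configuration. As defined, $c$ counts ``maximal disjoint cylindrical subregions realising condition (2),'' which is not obviously well defined (different choices of cylinders can give different counts for the same curve) nor obviously locally constant: the count could change by re-selecting cylinders without any arc ever crossing an aperture, which is precisely the teleportation problem you name but do not resolve. The two-component structure of $\Sigma(x,y)$ from \cite{paperi} applies to arcs with \emph{fixed} endpoints at distance less than $2$; in an isotopy of a closed curve the relevant endpoints move, so that result does not apply directly. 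Second, the conjecture asserts the result for all $\tau\in(0,2]$ including $\tau=2$, but your construction (like the paper's) threads thickness-one tubes through a thickness-two overhand and so lives strictly in the thin regime; the paper's own final comments concede that the saturated case $\tau=2$ requires different tools, and nothing in your proposal reaches it. Third, you are right that the Mather/Whitney part is the weakest link: $C^1$ compactness of cores gives convergence of curves, not semicontinuity of obstruction profiles, so your ``coarse frontier condition'' would yield at best a decomposition by invariants rather than a stratification in Mather's sense --- which you candidly admit, and which means that part of the statement remains untouched.
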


%\begin{conjecture}[Stratified Obstruction Theory]
%For each \( \tau \in (0,2] \), the space \( \mathcal{U}_\tau \) of \( 1 \)-constrained physical unknots contains countably infinite isotopy classes. As \( \tau \to 0 \), the structure of \( \pi_0(\mathcal{U}_\tau) \) becomes increasingly refined: new geometric obstructions, such as bottlenecks, cone-angle collapses, and curvature traps, emerge at finer scales and fragment the space into more rigid, distinguishable components. Although the total number of classes remains countable, the resolution at which these classes differ increases, producing a stratified isotopy landscape governed by scale.
%\end{conjecture}

\begin{remark}[Scale-Sensitive Isotopy]
A scale-sensitive isotopy is one whose admissibility depends on the geometric resolution set by the tube diameter \( \tau \). For large \( \tau \), the thick tube restricts both bending and proximity between strands. As \( \tau \to 0 \), volume constraints weaken, but curvature continues to obstruct motion. As shown in Lemma~\ref{r1}, even at zero thickness, a long arc cannot pass through a planar aperture of radius 2. Thus, as \( \tau \to 0\), the configuration space fragments under fine-scale geometric obstructions, and isotopy classes may split into geometrically distinct components, locked not by volume, but by curvature alone.
\end{remark}

%%----------------------------------------------------------------------------------------
%%	              Infinite Isotopy Classes of Physical Unknots
%%----------------------------------------------------------------------------------------
\section{Infinite Geometrically Distinct Unknots}

Diao, Ernst, and van Rensburg \cite{diao1998} conducted laboratory experiments on open knots (thick ropes confined between walls) to approximate energy-minimising configurations, showing consistency of theoretical results with numerical simulations. Later on Pierański, Przybył, and Stasiak studied tight open knots \cite{pieranski3} using an analogous setup to \cite{diao1998} but focusing on the numerical analysis of curvature, torsion, and the symmetric behaviour observed by these for some small knots.

\begin{theorem}\label{main1}
There exists a gordian unknot.
\end{theorem}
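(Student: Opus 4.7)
The plan is to exhibit an explicit $1$-constrained unknot $\gamma_0 \in \mathcal{U}_\tau$ whose geometry embeds a clasp or turnbuckle of the form anticipated in Figure~\ref{fig:fitting}, and then to show that no $1$-constrained, thickness-preserving isotopy can carry $\gamma_0$ to the round unit circle. The whole argument is driven by the long/short dichotomy of Definition~\ref{shortlong}, which is a topological invariant of the constrained isotopy class.

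First I would construct $\gamma_0$ as follows. Take a small planar ``ring'' arc $R$ whose inner region spans a disk $D$ of radius close to $1$ in the $xy$-plane, saturating the curvature constraint on its circular portion. Thread a ``cord'' strand through $D$, and push it back on itself to form a sharp U-turn bight extending above $D$, so that the cord enters $D$, turns, and exits $D$ on the opposite side. Close the configuration off with a long slack return arc in the lower half-space so that topologically $R$ may be slid off the bight, making $\gamma_0$ an unknot in the classical sense. The parameters (ring radius, pitch of the U-turn, distance between parallel cord strands) can be chosen so that $\gamma_0$ is $C^1$ and piecewise $C^2$, has curvature at most $1$ almost everywhere, and has tube thickness at least $\tau$; here the condition $\tau < 2$ is essential, since the two parallel cord strands running through $D$ would otherwise have no room to coexist.

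Now suppose for contradiction that $H\colon [0,1] \to \mathcal{U}_\tau$ is a $1$-constrained isotopy with $H_0 = \gamma_0$ and $H_1$ a round circle. Under the fixed parametrisation, let $\rho_s \subset H_s$ be the sub-arc corresponding to the ring, and $\alpha_s \subset H_s$ the sub-arc corresponding to the U-turn bight. At $s=0$, the ring $\rho_0$ bounds the aperture disk $D_0$, and the radius $1$ ball $B_0$ sitting beneath $D_0$ along the $z$-axis realises $\alpha_0$ as a long arc in the sense of Definition~\ref{shortlong}: its endpoints lie on $\partial B_0$ and the U-turn forces a point of $\alpha_0$ above $B_0$. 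At $s=1$, the round circle admits no such configuration relative to the corresponding $\rho_1$ and any admissible choice of $B_1$: after cutting off the sub-arc $\alpha_1$, what remains is short. Following the family $(\rho_s, \alpha_s, B_s)$ continuously in $s$, at some parameter $s_\star \in (0,1)$ the arc $\alpha_{s_\star}$ must simultaneously satisfy condition (1) and condition (2) of Lemma~\ref{r1}, which is forbidden under the curvature bound. This is the desired contradiction.

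The main obstacle will be setting up the aperture $D_s$ and ball $B_s$ intrinsically and continuously along the isotopy, so that the long/short dichotomy can be invoked uniformly. Concretely, one must verify that the ring sub-arc $\rho_s$ bounds a well-defined aperture throughout the isotopy, with a continuously varying axis along which the ball $B_s$ may be placed. I would handle this by showing that the curvature and thickness constraints prevent $\rho_s$ from degenerating: the curvature bound forces $\rho_s$ to contain a full near-circular loop, and the tube thickness $\tau>0$ prevents it from pinching off. A secondary point, which I would address by a compactness and genericity argument together with Lemma~\ref{3dntp} at the critical parameter, is ensuring that the transition between ``long'' and ``short'' configurations of $\alpha_s$ cannot be avoided through a degenerate boundary-tangency limit. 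Together with the explicit construction of $\gamma_0$, this yields the existence of a gordian unknot.
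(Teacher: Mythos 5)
Your overall strategy --- build an explicit thin unknot with a long arc threaded through an aperture, then use Lemma~\ref{r1} and the long/short dichotomy of Definition~\ref{shortlong} to contradict the existence of an untangling isotopy --- is the same obstruction mechanism the paper uses. But your construction of $\gamma_0$ has a genuine gap that the paper's construction is specifically designed to avoid. In a one-component curve the ``ring'' $\rho_s$ is only a sub-arc, and an isotopy can destroy the aperture without ever forcing the bight through it: it can simply unbend $\rho_s$. Your stated justification for non-degeneration --- that ``the curvature bound forces $\rho_s$ to contain a full near-circular loop, and the tube thickness $\tau>0$ prevents it from pinching off'' --- does not hold: the constraint is an \emph{upper} bound on curvature, so nothing prevents $\rho_s$ from being continuously flattened toward a straight segment (decreasing curvature is always admissible), and positive thickness forbids pinching but not opening. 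Once the aperture is opened, the triple $(\rho_s,\alpha_s,B_s)$ loses the confinement to the cylinder $\mathcal{C}$ that Lemma~\ref{r1} requires, and your intermediate-value argument never produces an arc that is simultaneously long and short. This is precisely why the two-component clasp of Figure~\ref{fig:fitting} yields a gordian \emph{unlink} but does not immediately yield a gordian \emph{unknot}.

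The paper closes this hole with a different construction: a doubled, nearly tight open overhand core (two parallel unit-thickness tubes inside a tight open trefoil), capped by Dubins \textsc{ccc} arcs. There the aperture is bounded by strands that wrap around one another, and the near-contact region $N_t$ (where the reach of $\partial K_t$ drops below the tube diameter) certifies that opening the aperture would force the tube radius below the thickness bound; persistence of $N_t$, not of a free-standing ring, is what keeps the bottleneck alive, and the cone-angle collapse then feeds the trapped long arc into Lemma~\ref{r1}. To repair your proposal you would need either to adopt such a self-wrapped aperture, or to add to your clasp an explicit thickness-enforced contact pattern around $\rho_s$ proving that it cannot be unbent; as written, the configuration you describe is very plausibly untangled by straightening the ring arc while the bight stays put.
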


\begin{proof}\noindent\textbf{Open knots.}
Let \( \gamma:[0,s] \to \mathbb{R}^3 \) be a \( 1 \)-constrained open overhand knotted core of a tube of thickness two, satisfying:
\begin{enumerate}
    \item \( \gamma(0) = (0,0,0) \), \( \gamma(s) = (0,0,h) \),
       \item the tangents \( \gamma'(t) \) are parallel to the \( z \)-axis at \( t = 0 \) and \( t = s \).
    \item \( \gamma \) and its tube are entirely contained between the planes \( z = 0 \) and \( z = h \), see figure~\ref{fig:render9}.
\end{enumerate}
\smallskip
Apply the SONO algorithm \cite{pieranski2, pieranski3} until reaching a nearly minimal ropelength realisation.

\smallskip
\noindent\textbf{The unknotted double overhand.} We properly embed two parallel tubes of thickness one each inside the nearly minimal thickness two open overhand and treat each as an open curve with curvature bound one and thickness one. The SONO algorithm is reapplied to the double tube until a nearly optimal configuration is reached. Then at the open ends, planar Dubins paths are attached as caps (both of type \textsc{ccc}, since their tangents are parallel, opposite oriented and distant apart one \cite{dubins 1, papera}) forming an embedded thin unknot $K_0$ with a double overhand core at the middle and two long arcs one in each side, see figure~\ref{fig:render9}.

%%%----------------------------------------------------------------------------------------
%%%	                                        FIGURE 2
%%%----------------------------------------------------------------------------------------
{ \begin{figure} 
 \begin{center}
\includegraphics[width=1\textwidth,angle=0]{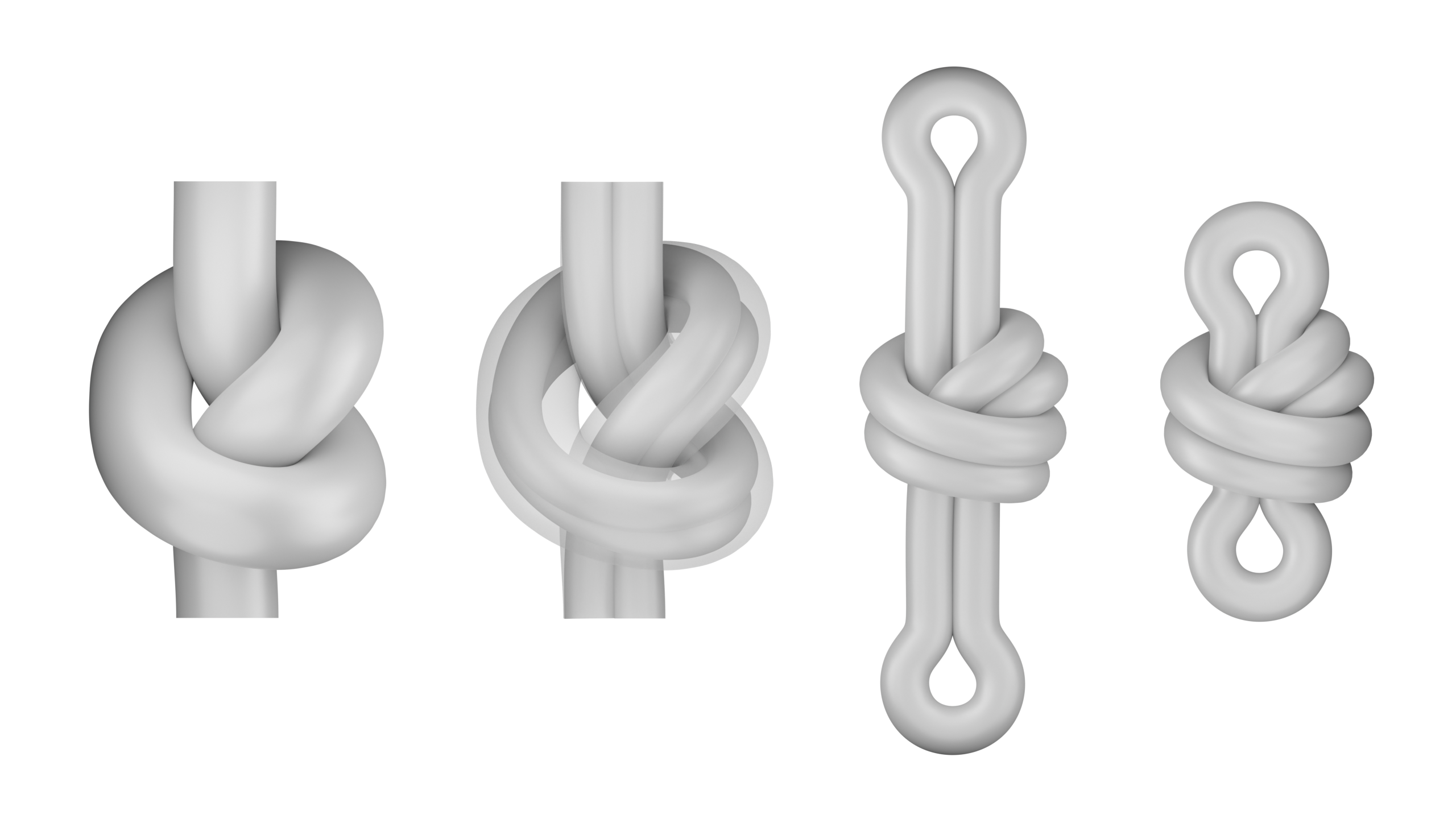}
\end{center}
\caption{From left to right: a nearly tight open trefoil with $\tau = 2$. A pair of embedded, parallel tubes inside the nearly tight open trefoil, each with $\tau = 1$. A nearly tight double overhand knot capped with Dubins curves. A gordian unknot.}

\label{fig:render9}
\end{figure}}

\smallskip
\noindent\textbf{The aperture contour.}
Let \( K_0 \subset \mathbb{R}^3 \) be the double overhand and let \( A_0 \subset K_0 \) be one of the two thick long arcs squeezed by the knot self-wrapping. We define the aperture contour through the triple \( (\alpha_0, D_0, N_0) \), where:

\begin{enumerate}
    \item \( \alpha_0 \subset \partial K_0 \) is a simple closed curve, the contour,
    \item \( D_0 \subset \mathbb{R}^3 \) is a topological disk bounded by \( \alpha_0 \), intersecting \( \partial K_0 \) transversely,
    \item \( N_0 = \left\{ x \in D_0 \,\middle\vert\, \mathrm{reach}(\partial K_0, x) < 2r \right\} \) is the near-contact region, i.e., where the local reach of the tube drops below the diameter.
\end{enumerate}

This encodes a physical bottleneck \( D_0 \) that separates the long arc \( A_0 \) from the rest of the knot, while \( N_0 \) captures geometric obstructions arising from nearby parts of the tube. The condition on reach ensures that \( N_0 \) includes not just close proximity but also curvature-induced constraints. During any isotopy \( \gamma_t \), the triple evolves to \( (\alpha_t, D_t, N_t) \), and persistence of the bottleneck is expressed as:
\[
\inf_{t \in [0,1]} \mathrm{diam}(D_t) > 0,
\quad
\liminf_{t \to t_0} \mathrm{Area}(N_t) > 0.
\]

\smallskip
\noindent\textbf{Cone angle collapse.}
We claim that no admissible isotopy \( \gamma_t \) exists from the double overhand unknot \( K_0 \) to a thickened round circle \( K_1 \). The triple \( (\alpha_t, D_t, N_t) \) defines a physical bottleneck separating a long arc \( A_0 \subset K_0 \) from the remainder of the tube. Let \( p_0 \in A_0 \) be a base point from which this separation is viewed as a spatial cone with aperture contour \( \alpha_0 \) and cone angle:
\[
\theta(0) = \sup_{x, y \in \alpha_0} \angle(x - p_0, y - p_0).
\]

\noindent Under any admissible isotopy \( \gamma_t \), the triple \( (\alpha_t, D_t, N_t) \) evolves continuously or upper semicontinuously. By assumption, the long arc eventually passes through the aperture: there exists a first time \( t_1 \in (0,1) \) at which it intersects \( D_{t_1} \), crossing from one side to the other.

Let \( p_t \in K_t \) be a tip of the thick long arc just before this transition. Define the evolving cone angle:
\[
\theta(t) = \sup_{x,y \in \alpha_t} \angle(x - p_t, y - p_t).
\]
Since the isotopy ends in \( K_1 \), the aperture necessarily vanishes:
\[
\liminf_{t \to t_1} \theta(t) = 0.
\]
Hence, for some \( t < t_1 \), the aperture satisfies \( \theta(t) < \theta(0) \). This collapse leads to a contradiction in two distinct ways:

\begin{enumerate}
    \item {Curvature violation:} The long arc \( A_t \) is continuously squeezed through the narrowing cone into a shorter arc with endpoints within \( D_t \). Then by Lemma~\ref{r1}, its curvature must increase, violating the 1-constrained condition.
    
    \item{Thickness violation:} The persistence of \( N_t \), defined via the local drop in reach, implies that even as \( \theta(t) \to 0 \), the disk remains obstructed by nearby parts of the tube. If \( D_t \) were to contract to permit passage while preserving \( N_t \), the local geometry would force a decrease in tube radius or reach, violating the unit-thickness constraint.
\end{enumerate}

In both cases, the aperture triple exposes an incompatibility between the isotopy and the geometric constraints. Furthermore, since \( \alpha_t \subset \partial K_t \) evolves continuously and continues to separate the long arc, the disk cannot collapse:
\[
\inf_{t \in [0,1]} \mathrm{diam}(D_t) > 0.
\]

Similarly, the obstruction persists under any thickness-preserving isotopy:
\[
\liminf_{t \to t_1} \mathrm{Area}(N_t) > 0,
\]
since the near-contacts in \( N_0 \) arise from self-wrapping and cannot be separated without violating unit thickness. By semicontinuity, this ensures the bottleneck remains nontrivial throughout the isotopy, completing the proof.
\end{proof}

Although the underlying knot types are topologically equivalent, the obstruction arises purely from geometric constraints which prevent an admissible isotopy between them.

%[Infinite Isotopy Classes at Fixed Thickness]
\begin{proposition}\label{prop:U1-infinite}
The space \( \mathcal{U}_1 \) of physical unknots with thickness and curvature bounded by one contains infinitely many distinct isotopy classes. That is,
$
\pi_0(\mathcal{U}_1)=\infty.
$
\end{proposition}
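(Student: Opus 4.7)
The plan is to iterate the construction of Theorem~\ref{main1}, replacing the open trefoil by an open \((2,2n+1)\)-torus knot for each \(n\ge 1\), producing a countable family \(\{K_n\}_{n\ge 1}\subset\mathcal{U}_1\) of gordian unknots, and then to distinguish them by counting persistent bottlenecks.

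First I would build \(K_n\) exactly as in Theorem~\ref{main1}: take a nearly tight open \((2,2n+1)\)-torus knotted core of a tube of thickness two, apply SONO toward a near minimal ropelength realisation, properly embed two parallel thickness-one tubes inside, and cap the two open ends with planar Dubins \textsc{ccc} paths. The output is a \(C^1\), piecewise \(C^2\), \(1\)-constrained embedded closed curve with thickness \(\ge 1\) whose underlying knot type is trivial, so \(K_n\in\mathcal{U}_1\). Along the doubled torus braid in the middle of \(K_n\) one can identify \(2n+1\) pairwise disjoint aperture triples \(\bigl(\alpha_n^{(i)},D_n^{(i)},N_n^{(i)}\bigr)\) in the sense of the proof of Theorem~\ref{main1}, one for each crossing of the underlying tight torus knot. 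The cone-angle collapse argument of Theorem~\ref{main1} applies verbatim to each individual triple: any continuous deformation in \(\mathcal{U}_1\) that would squeeze the associated long arc through the aperture forces either a curvature violation via Lemma~\ref{r1} or a drop in local reach below unit thickness. Hence each of the \(2n+1\) triples is individually persistent along any path from \(K_n\) in \(\mathcal{U}_1\), and in particular each \(K_n\) is gordian.

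To separate \(K_n\) from \(K_m\) when \(m\neq n\), I would introduce the invariant
\[
\beta(\gamma)=\inf_{\gamma'}\,\#\{\text{persistent essential aperture triples of }\gamma'\},
\]
the infimum being taken over representatives \(\gamma'\) in the same path-component of \(\gamma\) in \(\mathcal{U}_1\). By the construction above \(\beta(K_n)=2n+1\), so \(\beta\) takes infinitely many values on \(\{K_n\}\); since \(\beta\) is constant on path-components, the \(K_n\) lie in pairwise distinct components and \(\pi_0(\mathcal{U}_1)=\infty\).

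The main obstacle is making \(\beta\) a bona fide path-component invariant. Destruction and creation of bottlenecks are already ruled out by the cone-angle argument, applied in both temporal directions. What remains is to exclude merging or splitting of distinct triples under an arbitrary continuous deformation in \(\mathcal{U}_1\). The semicontinuity estimates \(\inf_t\mathrm{diam}(D_t)>0\) and \(\liminf_t\mathrm{Area}(N_t)>0\) force each triple to persist individually; two triples that are geometrically disjoint at \(t=0\) cannot be continuously brought to coincide without dragging the separating arc between them across an aperture, which by Lemma~\ref{r1} again violates the curvature bound. Formalising this tracking of persistent triples along \(\mathcal{U}_1\)-paths is the delicate technical step on which the proof hinges.
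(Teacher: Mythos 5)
Your overall strategy --- iterate the construction of Theorem~\ref{main1} to produce a countable family in \( \mathcal{U}_1 \) and separate its members by counting persistent aperture obstructions --- is the same as the paper's, but the family you build is different, and the difference matters. The paper does not pass to \((2,2n+1)\)-torus knots: it forms \(K_n\) by cutting the Dubins caps and stacking \(n\) vertically aligned copies of the double overhand \(K_1\), joined by short vertical tubes and re-capped with planar Dubins arcs. Each stacked copy then visibly carries its own copy of the long-arc/aperture configuration of Theorem~\ref{main1}, with the \(n\) obstructions pairwise separated by the vertical spacers, so the count of obstructions and their disjointness are built into the construction. In your variant the \(2n+1\) claimed triples all live in a single tight twist region of the doubled \((2,2n+1)\)-torus pattern, and it is not clear that each crossing of the underlying diagram yields a genuine obstruction in the sense of the proof: the cone-angle argument needs a \emph{long arc} in the sense of Definition~\ref{shortlong} --- endpoints on a unit sphere with a point above it, hence diameter at least \(2\) by Lemma~\ref{r1} --- trapped behind a separating disk. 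In a nearly tight \((2,2n+1)\) double-helix region the strands wind around one another without each individual crossing producing such a U-turn through an aperture; plausibly only the ends of the twist region do. So the claim \(\beta(K_n)=2n+1\), with one persistent triple per crossing, is the weak link of your version and would need a separate geometric argument that the paper's stacking construction does not require.

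That said, the step you correctly flag as delicate --- showing that the number of persistent triples is constant on path components of \( \mathcal{U}_1 \), i.e., excluding merging, splitting, or cancellation of obstructions along an arbitrary admissible isotopy --- is exactly the step the paper itself leaves at the level of assertion (``by the cone-angle and bottleneck obstruction argument \dots\ no two of these unknots can be connected''). Your explicit invariant \(\beta\) is an honest attempt to make that assertion precise; if you carry it out, do so for the stacked family, where the individual persistence and mutual disjointness of the \(n\) obstructions are geometrically transparent.
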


\begin{proof}
Let \( K_1 \subset \mathcal{U}_1 \) be a thick double overhand unknot constructed with tube radius \( 1/2 \), as described in Theorem~\ref{main1}. For each \( n \geq 1 \), construct a new knot \( K_n \) by cutting the Dubins caps and stacking \( n \) disjoint vertically aligned copies of \( K_1 \), joined by short vertical tubes and capped at the end with planar Dubins arcs to form a closed \( 1 \)-constrained curve.

Each configuration \( K_n \) lies in \( \mathcal{U}_1 \) and inherits geometric obstructions to isotopy from the original overhand core. By the cone-angle and bottleneck obstruction argument in Theorem~\ref{main1}, no two of these unknots can be connected via a curvature and thickness preserving isotopy. Thus, the knots \( \{K_n\}_{n \geq 1} \) represent infinitely many distinct elements in \( \pi_0(\mathcal{U}_1) \).
\end{proof}

%[Infinite Stratification of Physical Unknots]

\begin{corollary}
The stratified union of physical unknots of thickness at most one contains infinitely many distinct isotopy classes:
\[
\left| \bigcup_{\tau \leq 1} \pi_0(\mathcal{U}_\tau) \right| = \infty.
\]
\end{corollary}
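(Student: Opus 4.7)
The plan is to reduce this corollary to Proposition~\ref{prop:U1-infinite} via the monotonicity of the family $\{\mathcal{U}_\tau\}$. Since the index $\tau = 1$ lies in $[0,1]$, the set $\pi_0(\mathcal{U}_1)$ appears as one of the constituents of the union $\bigcup_{\tau \leq 1} \pi_0(\mathcal{U}_\tau)$. Proposition~\ref{prop:U1-infinite} provides a countably infinite family $\{K_n\}_{n \geq 1}$ of pairwise non-isotopic unknots in $\mathcal{U}_1$, whose classes already exhaust infinitely many elements of the union, yielding the claim immediately.

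To give the stratified phrasing its full strength, I would next show that the same family witnesses the infinitude at every stratum $\tau \in [0,1]$. By the nested inclusion $\mathcal{U}_1 \subset \mathcal{U}_\tau$ for $\tau \leq 1$, each $K_n$ represents a class $[K_n]_\tau \in \pi_0(\mathcal{U}_\tau)$. What must be checked is that these classes remain distinct when the thickness constraint is relaxed: a priori, permitting thinner intermediate configurations could allow an isotopy that was forbidden at $\tau = 1$.

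To rule this out, I would revisit the cone-angle collapse argument of Theorem~\ref{main1}. That argument branches into two independent prongs, curvature violation via Lemma~\ref{r1} and thickness violation via the reach condition on the near-contact region $N_t$. The first prong is purely curvature-based and is explicitly thickness-independent, as emphasised in the scale-sensitive isotopy remark and in the zero-thickness example of \cite{paperi}. Hence even at $\tau = 0$ a long arc cannot pass through the aperture $D_t$ without exceeding the unit curvature bound, so $[K_n]_\tau \neq [K_m]_\tau$ for $n \neq m$ at every $\tau \in [0,1]$.

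The main (minor) obstacle is bookkeeping: reformulating the aperture data $(\alpha_t, D_t, N_t)$ under a variable thickness constraint and verifying that the diameter of $D_t$ remains bounded below along any $\tau$-admissible isotopy. Because the long-arc versus short-arc dichotomy of Definition~\ref{shortlong} is governed entirely by the $1$-constrained curvature bound, this reformulation is a direct translation of the argument of Theorem~\ref{main1} and introduces no new geometric content.
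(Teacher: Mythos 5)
Your proposal is correct and follows essentially the same route as the paper: both reduce the corollary to Proposition~\ref{prop:U1-infinite} via the nesting $\mathcal{U}_1 \subset \mathcal{U}_\tau$ for $\tau \leq 1$. Your additional paragraphs addressing whether the classes $[K_n]_\tau$ remain distinct after relaxing the thickness constraint are more careful than the paper's own two-line argument, which implicitly assumes this; the paper does not carry out that verification.
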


\begin{proof}
By Proposition~\ref{prop:U1-infinite}, the space \( \mathcal{U}_1 \) contains infinitely many distinct isotopy classes. Since the filtration satisfies
\[
\mathcal{U}_1 \subset \mathcal{U}_\tau \subset \mathcal{U}_0 \quad \text{for all } \tau < 1,
\]
each isotopy class in \( \mathcal{U}_1 \) remains admissible for all smaller values of \( \tau \). Therefore, the union over all \( \tau \leq 1 \) also contains infinitely many distinct classes.
\end{proof}

%\begin{proof}
%By Proposition~\ref{prop:U1-infinite}, the space $\mathcal{U}_1$ contains infinitely many distinct isotopy classes. Since the filtration satisfies
%\[
%\mathcal{U}_1 \subset \mathcal{U}_\tau \subset \mathcal{U}_0 \quad \text{for all } \tau < 1,
%\]
%each isotopy class in $\mathcal{U}_1$ persists throughout the entire lower filtration. Therefore, the union over all \( \tau \leq 1 \) also contains infinitely many distinct classes.
%\end{proof}

%[The Stratified Frontier]
{\bf{Final Comments.}}
The presented stratified theory reveals a transition in the geometry of physical unknots as thickness varies. For sub maximal thickness $\tau < 2$, the space of unknots fragments into infinitely many isotopy classes, enabled by a rich repertoire of geometric obstructions: aperture bottlenecks that trap long arcs, waist constraints that resist compression, self-contact patterns that prevent unraveling. This intricate stratification emerges from the flexibility of thin tubes, where curvature constraints permit Dubins-type deformations while still maintaining knottedness through prescribed local geometries.

At the critical value \( \tau = 2 \), the behaviour changes dramatically. The tube saturates its curvature-limited normal injectivity radius, eliminating the geometric slack that enabled thin knot constructions. Whereas thin knots exploit controlled buckling and localised pinching, thick knots become globally rigid: maximal self-avoidance leaves no room for aperture collapse or long-arc passage. This transition is not merely quantitative, it marks a qualitative shift from the flexible landscape of thin knots, with its hierarchical obstructions, to the singular rigidity of the thick regime. Understanding unknots in this saturated geometry may require new tools, drawing from Morse theory, critical point analysis, or the discrete geometry of packing constraints.

%The $\tau=2$ frontier thus delineates where conventional geometric obstruction strategies give way to new phenomena in physical knot theory.

\section*{Acknowledgments}

I thank Hyam Rubinstein, Joel Hass and Rob Kusner for their insightful suggestions and inspiring discussions throughout the development of this work.

%----------------------------------------------------------------------------------------
%	REFERENCES
%----------------------------------------------------------------------------------------

 \end{document}